\documentclass{article}
\usepackage{amsthm}
\usepackage{amsmath}
\usepackage{amsfonts}
\usepackage[letterpaper,body={14.5cm,23cm}, mag=1000]{geometry}
\usepackage{amssymb}
\usepackage{secdot}
\usepackage{setspace}
\usepackage{titlesec}
\titleformat{\section}[runin]{\bfseries\filcenter}{\thesection}{1em}{}

\renewcommand{\thesection}{\arabic{section}}
\title{\large \bf On the conjecture of non-inner automorphisms of  finite $p$-groups} 
\author{\small \bf Mandeep Singh$^\ast$ and Mahak Sharma$^{\ast\ast}$\\
\small \em $^\ast$Department of Mathematics, Arya College, Ludhiana - 141 001,
India\\
\small \em $^{\ast\ast}$Department of Mathematics, Goswami Ganesh Dutta Sanatan Dharma College, Chandigarh - 160019,
India\\
}
\date{}
\DeclareMathOperator{\Hom}{Hom}

\DeclareMathOperator{\Inn}{Inn}
\DeclareMathOperator{\Aut}{Aut}

\DeclareMathOperator{\Cent}{Aut_z}

\newtheorem{thm}{Theorem}[section]
\newtheorem{lm}[thm]{Lemma}
\newtheorem{cor}[thm]{Corollary}

\begin{document}
\maketitle

\begin{abstract}
\noindent Let $p$ be a prime number. A longstanding conjecture asserts that every finite non-abelian $p$-group has a non-inner automorphism of order $p$. In this paper, we prove that if $G$ is an odd order finite non-abelian monolithic $p$-group such that every maximal subgroup of $G$ is non-abelian and 
		$[Z(M), g] \leq Z(G)$ for every maximal subgroup $M$ of $G$ and $g \in G \setminus M$. Then $G$ has a non-inner automorphism of order $p$ leaving the Frattini subgroup $\Phi(G)$ elementwise fixed.
\end{abstract}

\vspace{2ex}
\noindent {\bf 2010 Mathematics Subject Classification:}
Primary: 20D15, Secondary: 20D45.

\vspace{2ex}

\noindent {\bf Keywords:} Finite $p$-groups, Non-inner automorphisms.

\section{Introduction}
Let $G$ be a finite non-abelian $p$-group, where $p$ is a prime number. In 1966, Gasch\"utz \cite{gas1966} proved that $G$ has a non-inner automorphism of $p$-power order. In 1973, Berkovich \cite{ber2008} proposed the following conjecture (see for eg. \cite[Problem 4.13]{mazkhu}). 
	\begin{center}
		{\bf Prove that every finite non-abelian p-group admits an automorphism of order $p$ which is not an inner one.}
	\end{center}
	Mathematicians are interested in finding the non-inner automorphism of order $p$ which fixes one of the subgroups $Z(G)$, $\Phi(G)$, $\Omega_1(Z(G))$ or $\Omega_1(\Phi(G))$ elementwise.
	In 1965, Liebeck \cite{lie1965} proved that if $G$ is of class 2, then $G$ has a non-inner automorphism of order $p$ if $p >2$, and order $2$ or $4$ if $p=2$ which fixes $\Phi(G)$ elementwise. This conjecture has been settled for the following classes of $p$-groups:
	$G$ is regular \cite{deasil2002, sch1980};
	the nilpotency class of $G$ is $2$ or $3$ \cite{abd2007, lie1965}; $C_G(Z(\Phi(G)))\neq \Phi(G)$ \cite{deasil2002}; $G/Z(G)$ is powerful \cite{abd2010}; $G$ is a $p$-group, where $p >2$ such that $(G, Z(G))$ is a Camina pair \cite{gho2013}; or $Z(G)$ is non-cyclic and $W(G)$ is non-abelian, where $W(G)/Z(G)=\Omega_1(Z_2(G)/Z(G))$ \cite{garsin2021}. For more details on this conjecture, the readers are advised to see (\cite{abdgho2014}, \cite{fouorf2014}, \cite{kom2024}, \cite{ruslegyad2016}) and references therein.
	
	It is observed that in most of the cases if the conjecture is false, then the center $Z(G)$ is cyclic. The main motivation behind this paper is the following natural question:\\
	
	{\bf Question 1}: Let $G$ be a finite non-abelian $p$-group with cyclic center $Z(G)$. Under what additional conditions on $G$ does the conjecture hold?\\
	
	Most recently, in 2024, Komma \cite{kom2024} answered this question by proving that if $G$ is an odd order $p$-group with cyclic center satisfying $C_G(G^p \gamma_3(G)) \cap Z_3(G)\leq Z(\Phi(G))$, then the conjecture is true.\\
	
	It follows from the main result of Attar \cite{att2009} that if $G$ is a finite non-abelian $p$-group such that $Z_2(G)/Z(G)$ is cyclic. Then $G$ has a non-inner central automorphism of order $p$ which fixes $\Phi(G)$ elementwise. Therefore the following natural question also arises.\\
	
	{\bf Question 2}: Let $G$ be a finite non-abelian $p$-group such that $Z_2(G)/Z(G)$ is non-cyclic abelian. Under what additional conditions on $G$ does the conjecture hold?\\
	
	In this paper, we give an affirmative answer to these questions in case of  monolithic $p$-groups. Infact, we prove that if $G$ is an odd order finite non-abelian monolithic $p$-group such that every maximal subgroup of $G$ is non-abelian and $[Z(M), g] \leq Z(G)$ for every maximal subgroup $M$ of $G$ and $g \in G \setminus M$, then $G$ has a non-inner automorphism of order $p$ leaving the Frattini subgroup $\Phi(G)$ elementwise fixed.

\section{Main results}
We first recall from \cite{hal1959} that a group $G$ is said to be monolithic if and only if there exists a non-trivial normal subgroup of $G$ which is contained in every non-trivial normal subgroup of $G$. Equivalently, a $p$-group $G$ is said to be monolithic if $Z(G) \simeq C_p$. For $x \in G$, $\lbrace [x,G] \rbrace$ denotes the set $\lbrace [x,g] \mid g \in G\rbrace$. It can be checked that for a finite non-abelian $p$-group $G$ if $[Z(M), g] \subseteq Z(G)$ for every non-abelian maximal subgroup $M$ of $G$ and $g \in G \setminus M$, then $[Z(M), g] \leq Z(G)$. All other unexplained notations are standard.\\

We will use the following well known lemmas.
	
\begin{lm}
Let $n \in N$, $x \in Z_2(G)$ and $y \in G$. Then 
\begin{enumerate}
\item[$(i)$] $(xy)^n=x^ny^n[y,x]^{\frac{n(n-1)}{2}}$.
\item[$(ii)$] $[x^n,y]=[x,y]^n=[x,y^n]$.
\end{enumerate}
\end{lm}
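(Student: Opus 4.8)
The plan is to exploit the defining property of the second centre: since $x \in Z_2(G)$, the image $xZ(G)$ lies in $Z(G/Z(G))$, which is exactly the statement that $[x,y] \in Z(G)$ for every $y \in G$. Consequently every commutator formed from $x$ with an element of $G$ is central, and so commutes both with all elements of $G$ and with every other such commutator. This single observation is what collapses the general Hall--Petrescu machinery into the clean formulas stated, so it is the fact I would isolate at the very start. Throughout I would use only the elementary commutator identities $[ab,c]=[a,c]^b[b,c]$ and $[a,bc]=[a,c][a,b]^c$, together with $x^{-1}yx = y[y,x]$.

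For part $(ii)$ I would run two short inductions on $n$, the base case $n=1$ being trivial. Writing $[x^n,y]=[x\cdot x^{n-1},y]=[x,y]^{x^{n-1}}[x^{n-1},y]$ and noting that $[x,y]\in Z(G)$ forces $[x,y]^{x^{n-1}}=[x,y]$, the induction hypothesis $[x^{n-1},y]=[x,y]^{n-1}$ yields $[x^n,y]=[x,y][x,y]^{n-1}=[x,y]^n$. Symmetrically, $[x,y^n]=[x\cdot,\,y\cdot y^{n-1}]=[x,y^{n-1}][x,y]^{y^{n-1}}$, and centrality again removes the conjugation superscript, giving $[x,y^n]=[x,y]^n$. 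Together these establish the chain $[x^n,y]=[x,y]^n=[x,y^n]$.

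For part $(i)$ I would again induct on $n$, with $n=1$ trivial. The key rearrangement is $y^n x = x\,y^n[y,x]^n$, which follows from $x^{-1}yx=y[y,x]$ by raising to the $n$-th power and using that $[y,x]$ is central (this is essentially part $(ii)$). Assuming $(xy)^n=x^n y^n[y,x]^{\frac{n(n-1)}{2}}$, I would compute
$$(xy)^{n+1} = x^n y^n [y,x]^{\frac{n(n-1)}{2}}\,xy = x^n\,(y^n x)\,y\,[y,x]^{\frac{n(n-1)}{2}} = x^{n+1} y^{n+1} [y,x]^{\,n+\frac{n(n-1)}{2}},$$
where the central factor $[y,x]^{\frac{n(n-1)}{2}}$ has been pulled freely to the right and $y^n x$ has been replaced by $x\,y^n[y,x]^n$. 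The exponent then collapses via $n+\tfrac{n(n-1)}{2}=\tfrac{n(n+1)}{2}$, closing the induction.

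Since the computations are elementary, there is no genuine obstacle here; the only thing requiring care is the bookkeeping, namely tracking which factors are central (hence movable) and verifying the exponent arithmetic in the final step. I would therefore state the centrality of $[x,y]$ once, up front, and invoke it uniformly rather than re-justifying each cancellation.
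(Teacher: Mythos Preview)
Your argument is correct: the centrality of $[x,y]$ for $x\in Z_2(G)$ is exactly the right observation, and both inductions go through as written (modulo the small typo ``$[x\cdot,\,y\cdot y^{n-1}]$'', which is clearly meant to be $[x,\,y\cdot y^{n-1}]$). There is nothing to compare against, however, since the paper states this lemma as well known and offers no proof of its own; your write-up would simply supply what the paper omits.
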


\begin{lm}
Let $G$ be a finite $p$-group, $M$ be a maximal subgroup of $G$ and $g \in G\setminus M$. Let $u \in Z(M)$ such that $(gu)^p=g^p$. Then the map $\alpha$ given by $g \rightarrow gu$ and $m \rightarrow m $, for all $m \in M$, can be extended to an automorphism of $G$ of order $p$ that fixes $M$ elementwise.
\end{lm}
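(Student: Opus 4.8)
The plan is to build $\alpha$ explicitly on a normal form for the elements of $G$ and then check directly that it is a homomorphism, with the two hypotheses feeding in at exactly the two places where a cyclic extension requires consistency. Since $M$ is maximal in a finite $p$-group it is normal of index $p$, so $G/M = \langle gM\rangle \cong C_p$ and $g^p \in M$. Consequently every element of $G$ has a unique expression $g^i m$ with $0 \le i \le p-1$ and $m \in M$, and I define $\alpha(g^i m) = (gu)^i m$. This is plainly a well-defined set map, it fixes $M$ elementwise (the case $i=0$), and it realizes the prescribed assignment $g \mapsto gu$.

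First I would record the conjugation identity $(gu)\,m\,(gu)^{-1} = g\,(u m u^{-1})\,g^{-1} = g m g^{-1}$ for all $m \in M$, which holds because $u \in Z(M)$ forces $u m u^{-1}=m$. Thus conjugation by $gu$ and conjugation by $g$ induce the \emph{same} automorphism of $M$, so $(gu)^{-j} m (gu)^{j} = g^{-j} m g^{j}$ for every $j$.

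Then I would verify $\alpha(ab) = \alpha(a)\,\alpha(b)$ for $a = g^i m$ and $b = g^j n$. Writing $ab = g^{i+j}\,(g^{-j} m g^{j})\,n$ and setting $m' = g^{-j} m g^{j}\in M$, there are two cases. If $i+j \le p-1$, the expression $g^{i+j}(m'n)$ is already in normal form, and using the conjugation identity one gets $\alpha(a)\alpha(b) = (gu)^{i+j} m' n = \alpha(ab)$. If $i+j \ge p$, I would pull out $g^{p}$ (which lies in $M$ and commutes with powers of $g$) to put $ab$ back in normal form; on the image side the analogous step replaces $(gu)^{p}$ by $g^{p}$. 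This is the single point where $(gu)^{p} = g^{p}$ is needed, and it is exactly what makes the two sides agree. Hence $\alpha$ is an endomorphism.

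Finally, $\alpha$ is surjective because its image contains $M$ and $gu$, hence $g = (gu)u^{-1}$, so the image equals $\langle M, g\rangle = G$; a surjective endomorphism of a finite group is an automorphism. An easy induction gives $\alpha^{k}(g) = g u^{k}$ while $\alpha^{k}$ fixes $M$ pointwise, so the order of $\alpha$ equals the order of $u$ in $Z(M)$; in particular $\alpha^{p}=\mathrm{id}$ once $u^{p}=1$, and $\alpha \neq \mathrm{id}$ since $u \neq 1$, yielding order exactly $p$. I expect the main obstacle to be the homomorphism check in the carry case $i+j\ge p$: one must track $g^{p}$ carefully through the normal form and see that the power hypothesis $(gu)^{p}=g^{p}$ is precisely the relation keeping $\alpha$ consistent, while the hypothesis $u\in Z(M)$ governs the conjugation action — these being exactly the two data ($g^{p}\in M$ and the $g$-action on $M$) that determine the cyclic extension $1 \to M \to G \to C_p \to 1$.
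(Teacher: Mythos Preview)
The paper does not prove this lemma; it is quoted as ``well known'' and used as a black box in the proof of Theorem~2.3, so there is nothing to compare against directly. Your argument is the standard one and is correct for the main claim: the normal form $g^im$ with $0\le i\le p-1$, the observation that $u\in Z(M)$ makes conjugation by $gu$ on $M$ coincide with conjugation by $g$, and the use of $(gu)^p=g^p$ to handle the carry when $i+j\ge p$ together give a clean verification that $\alpha$ is an automorphism fixing $M$ elementwise.

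One point deserves a comment. You correctly show that the order of $\alpha$ equals the order of $u$, and then write ``$\alpha^{p}=\mathrm{id}$ once $u^{p}=1$''. Be aware that the stated hypotheses do \emph{not} force $u^p=1$. For an odd example, take $M=C_9\times C_9\times C_9=\langle a_1\rangle\times\langle a_2\rangle\times\langle a_3\rangle$ and $G=M\rtimes C_3$ with the generator $g$ cyclically permuting the $a_i$; then $u=a_1a_2^{-1}\in Z(M)=M$ has order $9$, yet $(gu)^3=g^3\cdot(g^{-2}ug^2)(g^{-1}ug)u=g^3$, and the resulting $\alpha$ has order $9$, not $3$. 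So, read literally, the lemma needs the additional hypothesis that $u$ has order $p$ (and in particular $u\neq 1$). This is harmless for the paper: in the proof of Theorem~2.3 the element $u$ is chosen in $\Omega_1(Z_2(G))\setminus Z(G)$, so $u^p=1$ and $u\neq 1$ hold by construction, and then your computation of the order gives exactly $p$ as required.
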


\begin{thm}
Let $G$ be a finite non-abelian monolithic $p$-group $(p > 2)$ such that every maximal subgroup of $G$ is non-abelian and $[Z(M), g] \leq Z(G)$ for every maximal subgroup $M$ of $G$ and $g \in G \setminus M$. Then $G$ has a non-inner automorphism of order $p$ which fixes 
$\Phi(G)$ elementwise.
		
\end{thm}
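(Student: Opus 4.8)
The plan is to build the required automorphism directly via Lemma 2.2 in all but one residual configuration, which is then settled by the theorem of Attar quoted in the Introduction. I would open with three preliminary remarks. Since $G$ is monolithic, $Z(G)\cong C_p$ is the monolith, so $Z(G)\le\Phi(G)\le M$ for every maximal subgroup $M$; consequently $C_G(M)=Z(M)$, because an element of $C_G(M)\setminus M$ would, together with $M$, generate $G$ and hence lie in $Z(G)\le M$. Since $[Z(M),M]=1$, the hypothesis that $[Z(M),g]\le Z(G)$ for all $g\in G\setminus M$ is equivalent to $Z(M)\le Z_2(G)$. Finally, for any $x\in Z_2(G)$ the map $g\mapsto[x,g]$ is a homomorphism $G\to Z(G)$, so $[G:C_G(x)]\le p$; hence, when $x\notin Z(G)$, the subgroup $C_G(x)$ is maximal and $x\in Z(C_G(x))$. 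Combining the last two remarks gives the identity $Z_2(G)=\bigcup_{M\text{ maximal}}Z(M)$.

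I would then distinguish two cases. \emph{Case 1: some maximal subgroup $M$ has $Z(M)$ non-cyclic, or $Z(M)=Z(G)$.} Fix $g_0\in G\setminus M$. The map $z\mapsto[g_0,z]$ is a homomorphism $Z(M)\to Z(G)$, so $[g_0,Z(M)]$ is a subgroup of $Z(G)$, of order at most $p$, contained in $\Omega_1(Z(M))$ (as $Z(G)\le Z(M)$ has exponent $p$). If $Z(M)$ is non-cyclic then $|\Omega_1(Z(M))|\ge p^2$, and if $Z(M)=Z(G)$ then $[g_0,Z(M)]=1$; in either case I may choose $u\in\Omega_1(Z(M))\setminus[g_0,Z(M)]$, so that $u\ne1$, $u^p=1$ and $u\notin[g_0,Z(M)]$. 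Since $[u,g_0]\in Z(G)$, a short computation (using Lemma 2.1(i)) gives $(g_0u)^p=g_0^p u^p[u,g_0]^{\binom{p}{2}}=g_0^p$, where the oddness of $p$ makes $[u,g_0]^{\binom{p}{2}}=1$. By Lemma 2.2 the assignment $g_0\mapsto g_0u$, $m\mapsto m$ for $m\in M$ extends to an automorphism $\alpha$ of $G$ of order $p$ fixing $M$, hence $\Phi(G)$, elementwise. If $\alpha$ were inner, say conjugation by $x$, then $x\in C_G(M)=Z(M)$ and $u=[g_0,x^{-1}]\in[g_0,Z(M)]$, contradicting the choice of $u$; so $\alpha$ is non-inner, as required.

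\emph{Case 2: every maximal subgroup $M$ has $Z(M)$ cyclic of order at least $p^2$.} Then $\Omega_1(Z(M))=Z(G)$ for each such $M$, so by the identity above every element of order $p$ of $Z_2(G)$ lies in $Z(G)$; that is, $\Omega_1(Z_2(G))=Z(G)\cong C_p$. As $p$ is odd, a finite $p$-group with a unique subgroup of order $p$ is cyclic, so $Z_2(G)$, and hence $Z_2(G)/Z(G)$, is cyclic; Attar's theorem then yields a non-inner central automorphism of $G$ of order $p$ fixing $\Phi(G)$ elementwise. Since the negation of Case 1 is exactly Case 2 (using $Z(G)\le Z(M)$ and $|Z(G)|=p$), the two cases are exhaustive and the argument is complete.

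The step I expect to be most delicate is the verification in Case 1 that $\alpha$ is non-inner, which hinges on the identity $C_G(M)=Z(M)$ and on keeping tight control of the small subgroup $[g_0,Z(M)]\le Z(G)$. The other point needing care is the power computation $(g_0u)^p=g_0^p$: it is precisely the oddness of $p$ that annihilates the commutator term $[u,g_0]^{\binom{p}{2}}$, and the argument genuinely fails for $p=2$. One should also confirm that no step tacitly assumes $\Phi(G)$ abelian and that the reduction to the two cases above is exhaustive.
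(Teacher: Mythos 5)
Your proposal is correct, and it reaches the conclusion by a genuinely different route from the paper. The paper argues by contradiction: assuming no such automorphism exists, it invokes Attar to get $Z_2(G)/Z(G)$ non-cyclic and Deaconescu--Silberberg to get $C_G(Z(\Phi(G)))=\Phi(G)$, then proves $Z_2(G)\le Z(\Phi(G))$ (hence abelian, via Grun's lemma and Lemma 2.1), uses the hypothesis to get $Z(M)\le Z_2(G)$, and uses Abdollahi's rank formula $d(Z_2(G)/Z(G))=d(G)\ge 2$ to produce $u\in\Omega_1(Z_2(G))\setminus Z(G)$; it then runs the Lemma 2.2 construction with $M=C_G(u)$ and derives the contradiction $u=[g,t]\in Z(G)$. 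You instead split on the structure of $Z(M)$: when some $Z(M)$ is non-cyclic (or equals $Z(G)$) you build the automorphism directly, choosing $u\in\Omega_1(Z(M))$ outside the subgroup $[g_0,Z(M)]\le Z(G)$, and get non-innerness from $u\notin[g_0,Z(M)]$ via $C_G(M)=Z(M)$ (note that when $Z(M)\ne Z(G)$ one in fact has $[g_0,Z(M)]=Z(G)$, so your $u$ is non-central, exactly as in the paper's choice); in the residual case you show $\Omega_1(Z_2(G))=Z(G)$ from the identity $Z_2(G)=\bigcup_M Z(M)$, conclude $Z_2(G)$ is cyclic since $p>2$ and a $p$-group with a unique subgroup of order $p$ is cyclic, and quote Attar exactly as the paper itself quotes him in the introduction. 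The shared core is Lemmas 2.1--2.2 plus the $C_G(M)=Z(M)$ trick and the appeal to Attar, but your argument bypasses Deaconescu--Silberberg, Grun's lemma, the abelianness of $Z_2(G)$, and Abdollahi's corollary; it is more self-contained and, notably, never uses the hypothesis that every maximal subgroup is non-abelian, so it actually establishes the slightly stronger statement that the paper's remark after Theorem 2.3 obtains only by a separate appeal to Deaconescu--Silberberg. The delicate points you flag (the $(g_0u)^p=g_0^p$ computation needing $p$ odd, and the exhaustiveness of the two cases given $Z(G)\le Z(M)$ and $|Z(G)|=p$) are handled correctly.
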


	Notice that we cannot drop the condition that every maximal subgroup $M$ in the hypothesis is non-abelian. Because if $G$ has an abelian maximal subgroup $M$, then $C_G(Z(\Phi(G))) \neq \Phi(G)$. Suppose, if possible, $C_G(Z(\Phi(G))) = \Phi(G)$. Then $C_G(\Phi(G))=Z(\Phi(G))$. Therefore 
	$$M\leq C_G(M) \leq C_G(\Phi(G)) =Z(\Phi(G)) = \Phi(G).$$
	Thus $M= \Phi(G)$, which is a contradiction. It   follows from \cite{deasil2002} that $G$ has a non-inner automorphism of order $p$ which fixes $\Phi(G)$ elementwise.

\begin{proof}
Since  $G$ is a finite non-abelian monolithic $p$-group, $|Z(G)|=p$. Suppose if possible that $G$ has no non-inner automorphism of order $p$ leaving the Frattini subgroup $\Phi(G)$ elementwise fixed. It follows from \cite{att2009} that $Z_2(G)/Z(G)$ is not cyclic. Also by \cite{deasil2002}, $C_G(Z(\Phi(G)))=\Phi(G)$.
		
		We claim that $Z_2(G)$ is abelian. It follows from \cite[5.2.22]{rob1996} that $$\exp(Z_2(G)/Z(G)) \leq \exp(Z(G))=p.$$ Since $Z_2(G)/Z(G)=Z(G/Z(G))$ is abelian, $Z_2(G)/Z(G)$ is elementary abelian. Therefore $g^p \in Z(G)$ for all $g \in Z_2(G)$ which implies that $[x, g^p]=1$ for all $x \in G$. Now for $g \in Z_2(G)$ and $x \in G$, it follows from Lemma 2.1 that $[x^p, g]= [x, g]^p = [x, g^p]=1$. Also by Grun's Lemma $[Z_2(G), G^{\prime}]=1$. Thus $$[Z_2(G), \Phi(G)]= [Z_2(G), G^p][Z_2(G), G^{\prime}]=1.$$ Therefore $Z_2(G) \leq C_G(\Phi(G))= Z(\Phi(G))$. Thus $Z_2
		(G)$ is abelian. 
		
		Next we prove that $Z(M) \leq Z_2(G)$ for every maximal subgroup $M$ of $G$. Suppose, if possible that $Z(M)$ is not contained in $Z_2(G)$ for some maximal subgroup $M$ of $G$. Let $m \in Z(M) \setminus Z(G)$ such that $m \notin Z_2(G)$. Then there exists some $x \in G\setminus M$ such that $[m,x] \notin Z(G)$, which is a contradiction to the hypothesis.

		Next we prove that $Z_2(G)$ contains an element of order $p$ which does not belong to $Z(G)$. Since $d(Z(G))=1$, it follows from \cite[corollary 2.3]{abd2010} that $d(Z_2(G)/Z(G))=d(G)$. Observe that $Z_2(G)$ can not be cyclic. Because if $d(Z_2(G))=1$, then  $d(Z_2(G)/Z(G))=1$, and therefore
		\[
		d(G)= d(Z_2(G)/Z(G))=1,
		\]
		which is a contradiction. As $Z_2(G)$ is abelian, it implies that $d(\Omega_1(Z_2(G)))=d(Z_2(G)) \geq 2$.
		Since $|Z(G)|=p$, therefore $Z_2(G)$ contains an element of order $p$ which does not belong to $Z(G)$. \\
		Consider $u \in \Omega_1(Z_2(G)) \setminus Z(G)$. Define a map $\sigma: G\rightarrow Z(G)$ by $\sigma(x)= [x,u]$ for all $x \in G$. The map $\sigma $ is surjective with $\text{ker} \sigma = C_G(u)$. Therefore the subgroup $M= C_G(u)$ is a maximal subgroup of $G$. Let $g \in G \setminus M$, it follows from Lemma 2.1 that $$(gu)^p = g^p u^p[u, g]^\frac{p(p-1)}{2} = g^p.$$ By Lemma 2.2, the map $\alpha : G\rightarrow G$ defined by $\alpha(g) = gu$ and $\alpha(m)= m$ for all $m \in M$ can be extended to an automorphims of order $p$. By hypothesis $\alpha$ must be an inner automorphism. Let $\alpha = \beta_t$, the inner automorphim induced by $t$. Since $\alpha$ is the identity on $M$, $t \in C_G(M)$. If $t \notin M$, then $G=M \langle t \rangle$. It implies that $t \in Z(G) \leq \Phi(G) \leq M$, which is a contradiction. It follows that $t \in Z(M) \leq Z_2(G)$. Thus $u=g^{-1}\alpha(g)=[g, t] \in Z(G)$, which contradicts the choice of $u$. Hence $G$ has a non-inner automorphism of order $p$ which fixes $\Phi(G)$ elementwise.
\end{proof}

Attar \cite{att2009} proved that if $G$ is a finite non-abelian $p$-group such that $C_G(Z(\Phi(G))) = \Phi(G)$ and  $\frac{Z_2(G)\cap Z(\Phi(G))}{Z(G)}$ is not elementary abelian of rank $rs$, where $r=d(G)$ and $s=d(Z(G))$, then $G$ has a non-inner central automorphism of order $p$ which fixes $\Phi(G)$ elementwise. The following corollary, which is an immediate consequence of Theorem 2.3, confirms that the conjecture holds if $G$ is a finite non-abelian monolithic $p$-group of odd order such that $C_G(Z(\Phi(G))) = \Phi(G)$ and  $\frac{Z_2(G)\cap Z(\Phi(G))}{Z(G)}$ is elementary abelian of rank $r$, where $r=d(G)$.

\begin{cor}
Let $G$ be a finite non-abelian monolithic $p$-group $(p >2)$ such that $C_G(Z(\Phi(G))) = \Phi(G)$ and $[Z(M), g] \leq Z(G)$ for every maximal subgroup $M$ of $G$ and $g \in G \setminus M$. Then $G$ has a non-inner automorphism of order $p$ which fixes $\Phi(G)$ elementwise.
\end{cor}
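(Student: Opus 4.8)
The plan is to reduce Corollary 2.4 directly to Theorem 2.3 by showing that the hypothesis $C_G(Z(\Phi(G))) = \Phi(G)$ already forces every maximal subgroup of $G$ to be non-abelian. This is exactly the contrapositive of the observation recorded in the remark immediately following Theorem 2.3, so the argument is short.

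Concretely, I would first argue: suppose for contradiction that some maximal subgroup $M$ of $G$ is abelian. Since $\Phi(G) \leq M$ and $M$ is abelian, we have $M \leq C_G(M) \leq C_G(\Phi(G))$. On the other hand, $C_G(\Phi(G)) \leq C_G(Z(\Phi(G))) = \Phi(G)$ by hypothesis, so $C_G(\Phi(G)) = Z(\Phi(G)) = \Phi(G)$ and hence $M \leq \Phi(G)$. But $\Phi(G)$ is a proper subgroup of the maximal subgroup $M$, so this is impossible. Therefore every maximal subgroup of $G$ is non-abelian. With that established, $G$ is a finite non-abelian monolithic $p$-group with $p>2$, every maximal subgroup non-abelian, and $[Z(M),g] \leq Z(G)$ for every maximal subgroup $M$ and $g \in G \setminus M$ — precisely the hypotheses of Theorem 2.3 — so $G$ has a non-inner automorphism of order $p$ fixing $\Phi(G)$ elementwise, as desired.

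There is essentially no genuine obstacle here: the only point requiring care is the chain $C_G(\Phi(G)) \leq C_G(Z(\Phi(G)))$, which is immediate since $Z(\Phi(G)) \leq \Phi(G)$, and the fact that $Z(\Phi(G)) = C_G(\Phi(G))$ once both are equal to $\Phi(G)$. Everything else is a direct invocation of Theorem 2.3, so the corollary follows at once.
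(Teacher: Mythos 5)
Your proposal is correct and follows essentially the same route as the paper: deduce from $C_G(Z(\Phi(G)))=\Phi(G)$ (via the chain $M\leq C_G(M)\leq C_G(\Phi(G))\leq C_G(Z(\Phi(G)))=\Phi(G)$, exactly the contrapositive of the remark after Theorem 2.3) that no maximal subgroup can be abelian, and then apply Theorem 2.3. The only difference is that the paper additionally notes $Z_2(G)\leq Z(\Phi(G))$ to connect the corollary with Attar's result, which is not needed for the deduction itself.
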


\begin{proof}
Since $C_G(Z(\Phi(G))) = \Phi(G)$, every maximal subgroup of $G$ is non-abelian. It follows from the proof of Theorem 2.3 that $Z_2(G) \leq Z(\Phi(G))$. Thus $\frac{Z_2(G)\cap Z(\Phi(G))}{Z(G)}= \frac {Z_2(G)}{Z(G)}$ is elementary abelian and $d(\frac{Z_2(G)\cap Z(\Phi(G))}{Z(G)})=d(G)$. Rest of the proof is similar.
\end{proof}

	We conclude this paper with an example of a group that satisfies the hypothesis of  Theorem 2.3.

	Consider 
	$$ G= \langle f_1, f_2, f_3, f_4, f_5, f_6, f_7  \rangle,$$ with the relations: $
	[f_2,f_1]=f_3,
	[f_1,f_7]=[f_2,f_7]=[f_3,f_6]=[f_4,f_5]=1, 
	f_1^{3}=f_4, 
	f_2^{3}=f_3,
	f_3^{3}=f_5,
	f_4^{3}=f_6,
	f_5^{3}=f_7,
	f_6^{3}=f_7^{3}=1$. This group is the group number 194 in the GAP Library of groups of order $3^7$. The nilpotency class of $G$ is $4$. In this group $Z(G)=\langle f_7 \rangle$ and therefore $|Z(G)|=3$. Thus $G$ is a monolithic $3$-group. This group has four maximal subgroups $M_1, M_2, M_3$ and $M_4$, where
	
	$$M_1 = \langle f_1, f_3, f_4, f_5, f_6, f_7 \rangle,
	\ Z(M_1)= \langle f_6, f_7 \rangle,$$ 
	
	$$M_2 = \langle f_2, f_3, f_4, f_5, f_6, f_7 \rangle,
	\ Z(M_2)= \langle f_5, f_7 \rangle,$$ 
	
	$$M_3 = \langle f_1f_2^{2}, f_3, f_4, f_5, f_6, f_7 \rangle,
	\ Z(M_3)= \langle f_5^{2}f_6f_7, f_7^{2} \rangle,$$ 
	
	$$M_4 = \langle f_1f_2, f_3, f_4, f_5, f_6, f_7 \rangle,
\	Z(M_4)= \langle f_5f_6f_7^{2}, f_7 \rangle.$$

	Therefore $\Phi(G)= \langle  f_3, f_4, f_5, f_6, f_7 \rangle$. Observe that since $\Phi(G)$ is non-abelian, it implies that $M_1, M_2, M_3$ and $M_4$ are also non-abelian.

	One can check that $[Z(M), g] \leq Z(G)$ for all $M \in \lbrace M_1, M_2, M_3, M_4 \rbrace$ and $g \in G \setminus M$. This group has 4374 automorphisms. One of these automorphism is $\alpha$, where 
	$$
	\alpha(f_i)=f_i ~ \mbox{for all} ~ i \in \{1,3,4,5,6,7\} ~ \mbox{and} ~ \alpha(f_2)=f_2f_6.
	$$ 
Note that $\alpha$ is a non-inner automorphism of order 3 which fixes $\Phi(G)$ element-wise.


\begin{thebibliography}{20}

\bibitem{abd2007} A. Abdollahi, {\em Finite $p$-groups of class $2$ have a noninner automorphism of order $p$}, J. Algebra, {\bf 312} (2007), 876-879.

\bibitem{abd2010} A. Abdollahi, {\em Powerful $p$-groups have noninner automorphism of order $p$ and some cohomology}, J. Algebra {\bf 323} (2010), 779-789.








\bibitem{abdgho2014} A. Abdollahi and S. M. Ghoraishi, {\em On  Noninner $2$-automorphisms of finite $2$-groups}, Bull. Aust. Math. Soc., {\bf 90} (2014), 227–231.




\bibitem{att2009} M. S. Attar, {\em On a conjecture about automorphisms of finite $p$-groups}, Arch. Math., {\bf 93} (2009), 399–403.



%\bibitem{hal} P. Hall, {\em The classification of prime-power groups}, J. Reine Angew. Math., {\bf 182} (1940), 130–140.


\bibitem{ber2008} Y. Berkovich, {\em Groups of Prime Power Order}, vol. 1, Walter de Gruyter, Berlin, (2008).




\bibitem{deasil2002} M. Deaconescu and G. Silberberg, {\em Noninner automorphism of order $p$ of finite $p$-groups}, J. Algebra, {\bf 250} (2002), 283-287.


\bibitem{fouorf2014} S. Fouladi and R. Orfi, {\em Noninner automorphisms of order $p$ in finite $p$-groups of coclass $2$, when $p>2$}, Bull. Aust. Math. Soc., {\bf 90} (2014), 232-236.


\bibitem{gho2013} S. M. Ghoraishi, {\em A note on automorphisms of finite $p$-groups}, Bull. Aust. Math. Soc., {\bf 87} (2013), 24-26.










\bibitem{garsin2021} R. Garg and M. Singh, {\em  Finite $p$-groups with non-cyclic center have a non-inner automorphism of order $p$}, Arch. Math., {\bf 117} (2) (2021), 129–132.

\bibitem{hal1959} P. Hall, {\em On the finiteness of certain soluble groups}, Proc. London Math. Soc., {\bf 3}(9) (1959), 595-622.
 




\bibitem{kom2024} P. Komma, {\em Non-inner automorphisms of order $p$ in finite $p$-groups of coclass $4$ and $5$}, Int. J. Group Theory, {\bf 13}(1) (2024), 97–114.


\bibitem{lie1965} H. Liebeck, {\em Outer automorphisms in nilpotent $p$-groups of class $2$}, J. London Math. Soc., {\bf 40} (1965), 268-275.


\bibitem{mazkhu} V. D. Mazurov and E. I. Khukhro, {\em Unsolved problems in group theory}, the kourovka notebook, Vol. 16, Russian Academy of Sciences, Siberian Branch, Sobolev Institute of Mathematics, Novosibirsk (2006).


\bibitem{rob1996} D. J. S. Robinson, {\em A course in the Theory of Groups}, New York Inc.: Springer-Verlag, (1996).

\bibitem{sch1980} P. Schmid, {\em A cohomological property of regular $p$-groups}, Math. Z., {\bf 175} (1980), 1-3.



\bibitem{gas1966} W. Gasch\"utz, {\em Nichtabelsche $p$-Gruppen besitzen \"auBere $p$-Automorphismen} , J. Algebra, {\bf 4} {1966}, 1-2.  


\bibitem{ruslegyad2016} M. Ruscitti, L. Legarreta and M. K. Yadav, {\em Non-inner automorphisms of order $p$ in finite $p$-groups
of coclass $3$}, Monatsh Math., {\bf 183}(4) (2016), 679-697.






\end{thebibliography}
\end{document}